\newtheorem{theorem}{Theorem}[section] 
\newtheorem{lemma}{Lemma}[section] 
\newtheorem{example}{Example}[section] 
\newtheorem{corollary}{Corollary}[section] 
\newtheorem{remark}{Remark}[section]
\journal{Discrete Math}
\begin{document}

\begin{frontmatter}



\title{Constructions for orthogonal designs using signed group orthogonal designs}


\author{Ebrahim Ghaderpour}
\address{ Department of Mathematics and Statistics, University of Calgary, Calgary, AB, Canada T2N 1N4 \\
              Email: {ebrahim.ghaderpour@ucalgary.ca}}


\begin{abstract}
Craigen introduced and studied signed group Hadamard matrices extensively and eventually provided an asymptotic existence result for Hadamard matrices. Following his lead, Ghaderpour introduced signed group orthogonal designs and showed an asymptotic existence result for orthogonal designs and consequently Hadamard matrices. In this paper, we construct some interesting families of orthogonal designs using signed group orthogonal designs to show the capability of signed group orthogonal designs in generation of different types of orthogonal designs.


\end{abstract}

\begin{keyword}
Circulant matrix\sep Golay pair \sep Hadamard matrix \sep Orthogonal design  \sep Signed group orthogonal design. 



\end{keyword}

\end{frontmatter}

\sloppy
\section{Preliminaries}
 A {\it Hadamard} matrix \cite{GS, Wolfe} is a square matrix with entries from $\{\pm 1\}$ whose rows are pairwise orthogonal.
An {\it orthogonal design} (OD) \cite{Craigenthesis, GS, Wolfe} of order $n$ and type $(c_1, \ldots, c_k)$, denoted by $OD(n; \ c_1, \ldots, c_k)$, 
 is a square matrix $X$ of order $n$ with entries from $\{0, \pm  x_1, \ldots, \pm  x_k\}$  that satisfies
\begin{align*}
XX^{\rm T}= \Big(\sum_{j=1}^k c_j x_j^2 \Big)I_n,
\end{align*}
where  the $c_j$'s are positive integers, the $x_j$'s are commuting variables, $I_n$ is the identity matrix of order $n$, and $X^{\rm T}$ is the transpose of $X$.
An OD with no zero entry is called a {\it full} OD. A Hadamard matrix can be obtained by equating all variables of a full OD to 1. The maximum number of variables in an OD of order $n=2^ab,$ $b$ odd, is $\rho(n)=8c+2^d$, where $a=4c+d$, $0\le d<4$. This number is called {\it Radon-Hurwitz number} \cite[Chapter 1]{GS}.

A {\it complex orthogonal design} (COD) \cite{Craigenthesis, Geramita, Ghaderpour} of order $n$ and type $(c_1, \ldots, c_k)$, denoted by $COD(n; \ c_1, \ldots, c_k)$, 
 is a square matrix $X$ of order $n$ with entries from $\{0, \pm  x_1, \pm i x_1, \ldots, \pm  x_k, \pm i x_k\}$  that satisfies
\begin{align*}
XX^*= \Big(\sum_{j=1}^k c_j x_j^2 \Big)I_n,
\end{align*}
where the $c_j$'s are positive integers, the $x_j$'s are commuting variables, and $*$ is the conjugate transpose.

Two matrices $A$ and $B$ of the same dimension are called {\it disjoint} \cite{GS, EG, Wolfe} if the matrix computed via entrywise multiplication of $A$ and $B$ is a zero matrix.
Pairwise disjoint matrices such that their sum has no zero entries are called {\it supplementary} \cite{Craigenthesis, GS}.

The {\it Kronecker product} \cite{GS, EG} of two matrices $A = [a_{ij}]$ and $B $ of orders  
 $m\times n$ and $r\times s,$ respectively, denoted by $A \otimes B$, is defined by 
$$
 A \otimes B := \begin{bmatrix}
                a_{11}B & a_{12}B & \cdots & a_{1n}B\\
                a_{21}B & a_{22}B & \cdots & a_{2n}B\\
                \vdots  &         &        & \vdots\\
                a_{m1}B & a_{m2}B & \cdots & a_{mn}B\\
               \end{bmatrix}\!,
 $$ 
that is a matrix of order $mr \times ns$.

The {\it non-periodic autocorrelation function} \cite{KHH}  of a sequence $A=(x_1, \ldots, x_n)$ of commuting square complex matrices of order $m,$ is defined by
$$N_{A}(j):=\left\{
      \begin{array}{l l}
      \displaystyle \sum_{i=1}^{n-j} x_{i+j}x_i^* \ \ \ {\rm if} \ \  j=0,1,2,\ldots, n-1,& \\
       0 \ \ \ \ \ \ \ \ \ \ \ \ \ \ j\ge n,& 
       \end{array} \right.$$
where $*$ is the conjugate transpose. 
A set $\{A_1, A_2, \ldots, A_{\ell}\}$ of sequences (not necessarily in the same length) is said to have zero autocorrelation if for all $j>0,$
$\sum_{k=1}^{\ell}N_{A_k}(j)=0.$  Sequences having zero autocorrelation are called {\it complementary} \cite{GS}.

A pair $(A;B)$ of $\{\pm 1\}$-complementary sequences of length $n$ is called a {\it Golay pair}  of length $n$. A {\it Golay number} is a positive integer $n$ such that there exists a Golay pair of length $n$. 
Similarly, a pair $(C;D)$ of $\{\pm 1, \pm i\}$-complementary sequences  of length $m$ is called a {\it complex Golay pair} of length $m$. A {\it complex Golay number} is a positive integer $m$ such that there exists a complex Golay pair of length $m$ \cite{CHK, Craigen, GS}.

A {\it signed group} $S$ \cite{Craigen, Craigenthesis, Ghaderpour}  is a group with a distinguished central element of order two. We denote the unit of a signed group by 1 and the distinguished central element of order two by $-1$.  In every signed group, the set $\{1,-1\}$ is a normal subgroup, and the {\it order} of signed group $S$ is the number of elements in the quotient group $S/\big<-1\big>$. Therefore, a signed group of order $n$ is a group of order $2n.$ 

For instance, the trivial signed group $S_{\mathbb R}=\{1,-1\}$ is a signed group of order $1$, the complex signed group $S_{\mathbb C}=\big<i\colon\,  i^2=-1\big>=\{\pm 1, \pm i\}$ is a signed group of order $2$, the quaternion signed group $S_Q=\big<j,k\colon\,  j^2=k^2=-1, jk=-kj\big>=\big\{\pm 1, \pm j, \pm k, \pm jk\big\}$ is a signed group of order $4$, and the set of all monomial $\{0, \pm 1\}$-matrices of order $n$, $SP_n$, forms a group of order $2^nn!$ and a signed group of order $2^{n-1}n!.$ The distinguished central elements of $S_{\mathbb R}$, $S_{\mathbb C}$ and $S_Q$ are all $-1$, and the distinguished central element of $SP_n$ is $-I_n$, where $I_n$ is the identity matrix of order $n$.

A signed group $S'$ is called a {\it signed subgroup} \cite{Craigen, Craigenthesis} of a signed group $S$, denoted by $S'\le S$, if $S'$ is a subgroup of $S$, and the distinguished central elements of $S'$ and $S$ coincide.  As an example, we have $S_{\mathbb R}\le S_{\mathbb C}\le S_Q$.

Let $S$ be a signed group and $T\le SP_n$. A {\it remrep}  ({\it real monomial representation}) \cite{CK, Craigen, Craigenthesis}  of degree $n$ is a map $\phi: S \rightarrow T$ such that for all $a,b \in S$, $\phi(ab)=\phi(a)\phi(b)$ and $\phi(-1)=-I_n$.

If $R$ is a ring with unit $1_R$, and $S$ is a signed group with distinguished central element $-1_S$,  then  
$R[S]:=\big\{\sum_{i=1}^n s_i r_i\colon\,  s_i\in \varrho,  \ r_i\in R\big\}$ is a signed group ring \cite{Craigen, Craigenthesis}, where $\varrho$ is a set of coset representatives of $S$ modulo $\big<-1_S\big>$. The set $\varrho$ is often referred to as a {\it transversal} of $\big<-1_S\big>$ in $S$.
For $s\in \varrho, r\in R$, we make the identification $-sr=s(-r)$. Addition is defined termwise, and multiplication is defined by linear extension. For instance,  $s_1r_1(s_2r_2+s_3r_3)=s_1s_2r_1r_2+s_1s_3r_1r_3$, where $s_i\in \varrho$, $r_i\in R$ $i\in \{1,2,3\}$.

In this work, we choose $R=\mathbb{R}$. 
If $x\in\mathbb{R}[S]$, then $x=\sum_{i=1}^n s_i r_i$, where $s_i\in \varrho,  r_i\in \mathbb{R}$, and we define the {\it conjugate} of $x$ by 
$\overline{x}:=\sum_{i=1}^n \overline{s_i} r_i=\sum_{i=1}^n s^{-1}_i r_i$.
Clearly, the conjugate is an involution that is $\overline{\overline x}=x$ for all $x\in \mathbb{R}[S]$, and $\overline{xy}=\bar{y}\bar{x}$ for all $x,y\in \mathbb{R}[S]$. As some examples, for any $a,b \in \mathbb{R}$, we have $\overline {a+ib}=a+\bar{i}b=a+i^{-1}b=a-ib$, where $i \in S_{\mathbb C}$, and
$\overline{ja+jkb}=j^{-1}a+(jk)^{-1}b=-ja-jkb$, where $j,k\in S_Q$. 

A {\it circulant} matrix $C$ \cite{Craigenthesis, GS, EG} is a square matrix whose each row vector is rotated one element to the right with respect to the previous row vector, and we denote it by ${\rm circ} \ \big(a_1, a_2, \ldots, a_n\big)$, where $\big(a_1, a_2, \ldots, a_n\big)$ is its first row. The circulant matrix $C$ can be written as $C=a_1I_n+\sum_{k=1}^{n-1}a_{k+1}U^k,$ where $U={\rm circ} \ \big(0,1,0,\ldots,0\big)$ (see \cite[Chapter 4]{GS}). Therefore, any two circulant matrices of order $n$ with commuting entries commute. 
If $C={\rm circ} \ \big(a_1, a_2, \ldots, a_n\big),$ then $C^*={\rm circ} \ \big(\overline{a}_1, \overline{a}_n, \ldots, \overline{a}_2\big)$, where $*$ is the conjugate transpose.

Suppose that $A=\big(a_1, a_2, \ldots, a_n\big)$ is a sequence whose nonzero entries
are elements of a signed group $S$ multiplied on the right by variables $x_i$'s ($1\le i\le k$). We use  $A_{\overline{R}}$ to denote a sequence whose elements are those of $A$, conjugated and in reverse order \cite{CHK, Ghader} that is
$A_{\overline{R}}=\big(\overline{a}_n, \ldots, \overline{a}_2, \overline{a}_1\big)$.

A {\it signed group weighing matrix} (SW) \cite{CK, Craigen} of order $n$ and weight $w$ {\it over} a signed group $S$, denoted by $SW(n,w,S)$, is a $(0,S)$-matrix  (that is a matrix whose nonzero entries are in $S$) $W$ such that $WW^*=wI_n$, where $*$ is the conjugate transpose. An SW over $S$ with no zero entry ($w=n$) is called a {\it signed group Hadamard matrix} (SH) over $S$ \cite{Craigen, Craigenthesis}, denoted by $SH(n,S)$. Note that the matrix operations for SWs and SHs are in the signed group ring $\mathbb{Z}[S]$.

Two square matrices $A$ and $B$ are called {\it amicable} if $AB^*=BA^*$, and they are called {\it anti-amicable} if $AB^*=-BA^*$, where $*$ is the conjugate transpose \cite{Craigenthesis, GS, Ghaderpour, Wolfe}. If the entries of $A$ and $B$ belong to a signed group ring, then the matrix operations are in the signed group ring as mentioned above.

\section{Some non-existence results for signed group orthogonal designs}
A {\it signed group orthogonal design} (SOD) of order $n$ and type $\big(u_1, \ldots, u_k\big)$ {\it over} a signed group $S$, denoted by $SOD\ \big(n; \ u_1, \ldots, u_k, S\big)$, is a square matrix $X$ of order $n$ whose nonzero entries
are elements of $S$ multiplied on the right by commuting variables $x_i$'s ($1\le i\le k$) such that
\begin{align*}
XX^{*}= \Bigg(\sum_{i=1}^k u_i x_i^2 \Bigg)I_n,
\end{align*} 
where $u_1, \ldots, u_k$ are positive integers, and $*$ is the conjugate transpose.  Note that the conjugate of entry $\epsilon x_i$ ($\epsilon \in S$) is $\overline{\epsilon} x_i=\epsilon^{-1} x_i$, and the matrix operations for SODs in this work are in the signed group ring $\mathbb{R}[S]$. 
It is shown \cite{Ghader,Ghaderpour} that if $X$ is an SOD over a finite signed group, then $XX^*=X^*X$. We call an SOD with no zero entries a {\it full} SOD.

\begin{remark}{\rm
In the definition of SOD in \cite{Ghader, Ghaderpour}, the author says that the entries of an SOD are from $\{0, \epsilon_1x_1, \ldots, \epsilon_kx_k\}$ ($\epsilon_i\in S$). What the author means by this arrangement is that each variable may appear in the SOD with various signed group elements as coefficients. In this paper, we also use the notation $SOD\ \big(n; \ u_1, \ldots, u_k, S\big)$ instead of $SOD\ \big(n; \ u_1, \ldots, u_k\big)$ over a signed group $S$.
}\end{remark}

\begin{example}
{\rm Consider the following square matrix: 
\begin{align*}
X= \left[ \begin {array}{rrrr} jkx_1&jx_2&kx_3&x_3\\ jx_2&jkx_1&x_3&kx_3 \\ kx_3&x_3&jkx_1&jx_2 \\ x_3&kx_3& jx_2&jkx_1\end {array} \right]\!,
\end{align*} 
where $x_1, x_2, x_3$ are commuting variables and $j,k\in S_Q$. We have 
\begin{align*}
XX^*=X\overline{X}^{\rm T}= \left[ \begin {array}{rrrr} jkx_1&jx_2&kx_3&x_3\\ jx_2&jkx_1&x_3&kx_3 \\ kx_3&x_3&jkx_1&jx_2 \\ x_3&kx_3& jx_2&jkx_1\end {array} \right]\!
 \left[ \begin {array}{rrrr} \overline{jk}x_1&\bar{j}x_2&\bar{k}x_3&x_3\\ \bar{j}x_2&\overline{jk}x_1&x_3&\bar{k}x_3 \\ \bar{k}x_3&x_3&\overline{jk}x_1&\bar{j}x_2 \\ x_3&\bar{k}x_3& \bar{j}x_2&\overline{jk}x_1\end {array} \right]\!.
\end{align*}
Let $\omega_{a,b}$ be the entry of the row $a$ and column $b$ of matrix $XX^*$. We have
\begin{align*}
\omega_{1,1}&= (jkx_1)(\overline{jk}x_1)+(jx_2)(\bar{j}x_2)+(kx_3)(\bar{k}x_3)+(x_3)(x_3) \\ &= jk\overline{jk} x_1x_1+j\bar{j}x_2x_2+k\bar{k}x_3x_3+x_3x_3 \\& = x_1^2+x_2^2+2x_3^2
\end{align*}
Similarly, it can be verified that $\omega_{2,2}=\omega_{3,3}=\omega_{4,4}=x_1^2+x_2^2+2x_3^2$. Moreover,
\begin{align*}
\omega_{1,3}&=(jkx_1)(\bar{k}x_3)+(jx_2)(x_3)+(kx_3)(\overline{jk}x_1)+(x_3)(\bar{j}x_2) \\ &=jk\bar{k}x_1x_3+jx_2x_3+k\overline{jk}x_3x_1+\bar{j}x_3x_2 \\&= j x_1x_3+jx_2x_3-jx_1x_3-jx_2x_3 
 \ \ \ \ \ \ {\rm commuting \ variables}\\&= j(x_1x_3-x_1x_3)+j(x_2x_3-x_2x_3) \\&=0.
\end{align*}
It can be verified that $\omega_{a,b}=0$ for $1\le a\neq b\le 4$. Therefore,  $XX^*=(x_1^2+x_2^2+2x_3^2)I_4$, and so $X$ is $SOD(4; 1,1,2,S_Q)$.
}
\end{example}

\begin{remark}
{\rm 
Equating all variables to 1 in any SOD results in an SW.  
Equating all variables to 1 in any full SOD results in an SH.
An SOD over the trivial signed group $S_{\mathbb{R}}$ is an OD, and
an SOD over the complex signed group $S_{\mathbb{C}}$ is a COD.}
\end{remark}
The following lemma is immediate from the definition of SOD.
\begin{lemma}{\rm \cite[Chapter 6]{Ghaderpour}}\label{equivalence1}
If $A$ is an  SOD over a signed group $S$, then 
permutations of the rows or columns  of $A$ do not affect  the orthogonality of $A$, and
multiplication of each row of $A$ from the left or each column of $A$ from the right by an element in $S$  does not affect  the orthogonality of $A$.
\end{lemma}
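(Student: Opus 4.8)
The plan is to realize each of the four operations as multiplication of $A$ by a structured matrix and then to verify that this multiplication preserves the defining identity $AA^* = \lambda I_n$, where I abbreviate $\lambda := \sum_{i=1}^k u_i x_i^2$. Concretely, permuting the rows of $A$ amounts to forming $PA$ for a permutation matrix $P$; permuting the columns amounts to forming $AQ$ for a permutation matrix $Q$; scaling row $r$ on the left by $s_r\in S$ amounts to forming $DA$ for the diagonal matrix $D=\mathrm{diag}(s_1,\ldots,s_n)$; and scaling column $c$ on the right by $t_c\in S$ amounts to forming $AE$ for $E=\mathrm{diag}(t_1,\ldots,t_n)$. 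Since the entries $0,1$ of a permutation matrix and the diagonal entries $s_r,t_c\in S$ all lie in $S$, each of $P,Q,D,E$ is a matrix over the signed group ring $\mathbb{R}[S]$, so all the products are well defined.

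Before computing I would isolate two elementary facts. First, the conjugate transpose is an anti-automorphism on matrices over $\mathbb{R}[S]$: using the involution property $\overline{xy}=\overline{y}\,\overline{x}$ from the preliminaries, one checks entrywise that $(XY)^*=Y^*X^*$, the order reversal coming from conjugation exactly compensating for the transpose. Second, each of the four matrices $M\in\{P,Q,D,E\}$ satisfies $MM^*=M^*M=I_n$. For a permutation matrix this is the standard relation $PP^{\mathrm T}=I_n$ together with the fact that its real entries are self-conjugate; for a diagonal signed-group matrix it follows from $s\overline{s}=ss^{-1}=1$ for every $s\in S$, so that $DD^*=\mathrm{diag}(s_1 s_1^{-1},\ldots,s_n s_n^{-1})=I_n$.

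With these in hand the four verifications become one-line computations. For a right multiplication $B=AM$ (the two column operations) I get
\begin{align*}
BB^* = AM(AM)^* = AMM^*A^* = A(MM^*)A^* = AA^* = \lambda I_n,
\end{align*}
so $B$ is again an $SOD(n;\,u_1,\ldots,u_k,S)$. For a left multiplication $B=MA$ (the two row operations) I get
\begin{align*}
BB^* = MA(MA)^* = M(AA^*)M^* = M(\lambda I_n)M^* = \lambda MM^* = \lambda I_n,
\end{align*}
where the last equality uses that $\lambda=\sum_i u_i x_i^2$ is built from the commuting variables and is therefore central in $\mathbb{R}[S]$, so it may be pulled past $M$.

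The computations themselves are routine; the only points that genuinely require the signed-group structure rather than a generic ring are the anti-automorphism property of $*$ and the centrality of $\lambda$, and these are the steps I would state carefully. In particular, because $S$ (for instance $S_Q$) need not be commutative, one cannot take $(XY)^*=Y^*X^*$ for granted, so I would verify it explicitly; once it is available, nothing else in the argument is sensitive to noncommutativity, which is why the lemma is indeed immediate from the definition of an SOD.
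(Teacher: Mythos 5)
Your proof is correct and complete; the paper itself offers no argument for this lemma (it is stated as ``immediate from the definition'' with a citation to the author's thesis), and your verification --- realizing each operation as left or right multiplication by a permutation or diagonal matrix over $\mathbb{R}[S]$, checking $(XY)^{*}=Y^{*}X^{*}$ and $MM^{*}=I_n$, and using the centrality of $\lambda=\sum_i u_i x_i^{2}$ to pull it past $M$ in the row case --- is exactly the routine computation being left to the reader. The two points you single out as requiring the signed-group structure (the anti-automorphism property of $*$ and the centrality of $\lambda$) are indeed the only places where noncommutativity of $S$ could cause trouble, and you handle both correctly, so nothing is missing.
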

We now show some non-existence results for SODs.
The following lemma is shown in \cite{Ghader}, and for the sake of completeness, we give a proof.
\begin{theorem}\label{musteven}
There does not exist any full SOD of order $n$ over any signed group, if $n$ is odd and $n>1$. 
\end{theorem}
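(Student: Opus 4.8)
The plan is to derive a parity contradiction from the orthogonality of any two distinct rows, which is available precisely because $n>1$. Write the nonzero entries of the full SOD $X$ as $X_{rc}=\epsilon_{rc}\,x_{t(r,c)}$, where $\epsilon_{rc}\in S$ and $t(r,c)\in\{1,\dots,k\}$ records which variable occupies position $(r,c)$; fullness guarantees that every entry has this form. Fix two distinct rows, say rows $1$ and $2$. Since $XX^{*}=\big(\sum_{i}u_ix_i^{2}\big)I_n$, its off-diagonal $(1,2)$ entry vanishes, and using that the variables are central we may collect each term to the right to obtain
\begin{align*}
\sum_{c=1}^{n}\epsilon_{1c}\,\overline{\epsilon_{2c}}\;x_{t(1,c)}\,x_{t(2,c)}=0.
\end{align*}

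Next I would read this as an identity among commuting indeterminates with coefficients in $\mathbb{R}[S]$, so that the coefficient of each distinct monomial $x_px_q$ must vanish separately. Grouping the $n$ columns according to the unordered pair $\{t(1,c),t(2,c)\}$, the vanishing of the coefficient of a fixed monomial reads $\sum_{c\in\mathcal{C}_{p,q}}\epsilon_{1c}\,\overline{\epsilon_{2c}}=0$ in $\mathbb{R}[S]$, where $\mathcal{C}_{p,q}$ denotes the set of columns realizing that monomial. Because $X$ is full, every column contributes to exactly one such set, so these sets partition all $n$ columns.

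The crux — and the step I expect to carry the real content — is a counting observation inside the signed group ring. Since $\overline{\epsilon_{2c}}=\epsilon_{2c}^{-1}\in S$, each summand $\epsilon_{1c}\,\overline{\epsilon_{2c}}$ is a single element of $S$, hence equals $\pm g$ for some transversal element $g\in\varrho$. Writing the sum in terms of the transversal, its vanishing forces, for every $g$, the number of $+g$ contributions to equal the number of $-g$ contributions; in particular $|\mathcal{C}_{p,q}|$ is even. Summing over all the monomials then gives $n=\sum_{\{p,q\}}|\mathcal{C}_{p,q}|$ as a sum of even numbers, so $n$ is even, contradicting the hypothesis that $n$ is odd.

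The only delicate point is justifying that a vanishing sum of elements of $S$ must have an even number of terms; this is exactly the fact that an element $\sum_{g\in\varrho}r_g\,g$ of $\mathbb{R}[S]$ is zero if and only if every real coefficient $r_g$ vanishes, applied to coefficients that are themselves sums of $\pm1$. Everything else is bookkeeping, so I would state that linear-independence fact explicitly and let the parity count close the argument.
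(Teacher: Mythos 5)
Your argument is correct and rests on the same core fact as the paper's proof: a sum of elements of $S$ that vanishes in the signed group ring must, by the linear independence of the transversal elements, pair each $+g$ with a $-g$ and hence have an even number of terms. The paper reaches this count by first setting all variables to $1$ to get an $SH(n,S)$ and normalizing its first row, whereas you keep the variables and refine the column partition by monomial, but this is only a cosmetic difference in setup.
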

\begin{proof}{
Assume that there is a full SOD of order $n>1$ over a signed group $S$.  Equating all variables to 1 in the SOD, one obtains a $SH(n,S)=[h_{ij}]_{i,j=1}^n.$ From the second part of Lemma \ref{equivalence1}, one may multiply each column of the $SH(n,S)$ from the right by the inverse of corresponding entry of its first row, $\overline{h}_{1j}$, to get an equivalent $SH(n,S)$ with the first row all 1 (see  \cite{CHK, Craigen} for the definition of equivalence). By orthogonality of the rows of the $SH(n,S)$, the number of occurrences of a given element $s\in S$ in each subsequent row must be equal to the number of occurrences of $-s$. Therefore, $n$ has to be even.}
\end{proof}

\begin{lemma}\label{SW6}
There does not exist any $SW(6,3,S)$.
\end{lemma}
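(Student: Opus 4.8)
The plan is to analyze the zero/nonzero pattern (the support) of a putative $W=SW(6,3,S)$ and extract a parity contradiction. First I would record the two basic consequences of $WW^{*}=3I_6$. Reading the diagonal, each nonzero entry $s\in S$ contributes $s\overline{s}=ss^{-1}=1$, so every row of $W$ has exactly three nonzero entries. Moreover, attaching a single variable $x$ to every entry turns $W$ into an $SOD(6;3,S)$, since $(Wx)(Wx)^{*}=3x^{2}I_6$; invoking the relation $XX^{*}=X^{*}X$ then gives $W^{*}W=3I_6$, so every column of $W$ also carries exactly three nonzero entries. This column-weight rigidity is what will make the later counting unambiguous.

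The arithmetic backbone is the parity principle in the signed group ring $\mathbb{R}[S]$: a sum $\epsilon_1+\cdots+\epsilon_N$ of elements $\epsilon_t\in S$ can vanish only if $N$ is even, because distinct transversal representatives in $\varrho$ are $\mathbb{Z}$-independent, so for each $r\in\varrho$ the $+r$ and $-r$ occurrences must balance. This is exactly the mechanism used in the proof of Theorem~\ref{musteven}. Applying it to each off-diagonal relation $(WW^{*})_{ik}=\sum_j w_{ij}\overline{w_{kj}}=0$, which is a vanishing sum of signed group elements indexed by the common support of rows $i$ and $k$, I conclude that any two distinct rows meet in an even number of columns; since each row has weight $3$, two distinct rows share $0$ or $2$ columns.

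Next I normalize using Lemma~\ref{equivalence1}: permute the columns so the nonzeros of row $1$ occupy columns $\{1,2,3\}$, then right-multiply each of those columns by the inverse of its row-$1$ entry, making row $1$ equal to $(1,1,1,0,0,0)$. Orthogonality of any later row with row $1$ forces the three block-$\{1,2,3\}$ entries of that row to sum to zero, so each of rows $2,\dots,6$ has an even number, hence $0$ or $2$, of nonzeros in columns $\{1,2,3\}$. Now I count: columns $1,2,3$ carry $3\cdot 3=9$ nonzeros in total, three of which come from row $1$, so the remaining six are distributed among rows $2,\dots,6$ in blocks of size $0$ or $2$; thus exactly two of these rows have all of their nonzeros in columns $\{4,5,6\}$. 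Each such row has three nonzeros confined to three columns, so it is supported on exactly $\{4,5,6\}$, and the two of them therefore share all three of those columns.

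This is the crux and the intended contradiction: the two rows just produced have common support of odd size $3$, so by the parity principle their orthogonality relation cannot hold, and no $SW(6,3,S)$ exists. I expect the main obstacle to be arranging the counting in the third paragraph so that the support pattern is pinned down with no escape cases; the $W^{*}W=3I_6$ fact is precisely what removes the freedom, and the parity principle itself is routine and already present in the paper.
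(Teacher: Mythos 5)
Your proof is correct. Both arguments rest on the same two pillars --- the parity principle that a vanishing sum of signed-group elements must have evenly many terms (so any two rows, and any two columns, meet in an even number of positions), and the fact that columns as well as rows carry exactly three nonzero entries --- and both end with the same contradiction: two parallel lines whose supports coincide in an odd number (three) of positions. Where you differ is in how you locate the offending pair. The paper proceeds constructively: after normalizing, it pins down the supports of rows $2$ and $3$ explicitly (the passage from $A_1$ to $A_2$) and then observes that columns $5$ and $6$ are both forced to live entirely in rows $4$--$6$. You instead run a global count on columns $1,2,3$ (nine nonzeros, three from row $1$, the rest distributed over rows $2$--$6$ in blocks of size $0$ or $2$), which forces exactly two rows to be supported on $\{4,5,6\}$. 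Your version is shorter and avoids the case-by-case filling of the matrix; it also makes explicit the column-weight fact $W^*W=3I_6$ (via the cited identity $XX^*=X^*X$) that the paper uses silently when it draws $A_1$ with a first column of weight three and later invokes orthogonality of columns $5$ and $6$. Note that the column-weight step is genuinely needed in your counting (without it the number of rows meeting columns $1$--$3$ in two positions is not pinned down), so it is worth keeping that justification in the final write-up.
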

\begin{proof}
{Assume that $A$ is $SW(6,3,S)$.  From Lemma \ref{equivalence1}, one may permute the rows and columns of $A$ to obtain a matrix of the following form:
$$A_1=\left[ \begin {array}{cccccc} \bigstar&\bigstar&\bigstar&0&0&0\\ \bigstar& & & & &\\
 \bigstar&&&&&\\ 0&&&&& \\ 0&&&&& \\ 0 &&&&&\end {array} \right]\!,$$
where the $\bigstar$'s are elements in $S$.
Using orthogonality of the first and second rows of $A_1$, only one of the entries at second row and second column or at second row and third column must be zero. Similarly, since the first and second columns of $A_1$ are orthogonal, one of the entries at second row and second column or at third row and second column must be zero. From the first part of Lemma \ref{equivalence1}, since $A_1$ is an SW, the following matrix must be also an SW.
$$A_2=\left[ \begin {array}{cccccc} \bigstar&\bigstar&\bigstar&0&0&0\\ \bigstar&\bigstar &0 &\bigstar & 0&0\\
 \bigstar&0&&&&\\ 0&\bigstar&&&& \\ 0&0&&&& \\ 0 &0&&&&\end {array} \right]\!.$$
Now, orthogonality of the first and second rows with the third row of $A_2$ forces $A_2$ to be of the following form:

$$A_2=\left[ \begin {array}{cccccc} \bigstar&\bigstar&\bigstar&0&0&0\\ \bigstar&\bigstar &0 &\bigstar & 0&0\\
 \bigstar&0&\bigstar&\bigstar&0&0\\ 0&\bigstar&&&& \\ 0&0&&&& \\ 0 &0&&&&\end {array} \right]\!,$$
which contradicts orthogonality of the fifth and sixth columns of $A_2$, so there is no $SW(6,3,S)$. 
}
\end{proof}
\begin{theorem}\label{3tuple}
There exists no $SOD\ (6; \ 3,3, S)$ and no $SOD\ (6; \ 2,2,2, S)$.
\end{theorem}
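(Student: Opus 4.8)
The plan is to assume in each case that such a design $X$ exists over a signed group $S$ and to split it into its variable components. Writing $X=\sum_i A_ix_i$, where $A_i$ is the $(0,S)$-matrix recording the positions of $x_i$, the identity $XX^*=\big(\sum_i u_ix_i^2\big)I_6$ forces, on comparing the coefficients of the monomials $x_i^2$ and $x_ix_j$, that $A_iA_i^*=u_iI_6$ and that $A_iA_j^*=-A_jA_i^*$ for $i\neq j$; that is, the components are signed group weighing matrices that are pairwise anti-amicable. In both types $\sum_iu_i=6$, so each row of $X$ carries six nonzero entries: $X$ is a full SOD and the supports of the $A_i$ partition the all-ones matrix.

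For the type $(3,3)$ design the conclusion is immediate. The component $A_1$ (obtained by setting $x_1=1$, $x_2=0$) is a $(0,S)$-matrix with $A_1A_1^*=3I_6$, i.e. an $SW(6,3,S)$, which is excluded by Lemma \ref{SW6}. Hence no $SOD(6;3,3,S)$ exists. This shortcut cannot handle the second type, because deleting variables alters the row weight only in steps of two and can never leave weight $3$.

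For the type $(2,2,2)$ design each component is an $SW(6,2,S)$, and the first step is to pin down the shape of such a matrix. Since a lone shared nonzero position can never cancel in an inner product, any two rows of $A_i$ meet in $0$ or $2$ columns; and no three rows can be supported on one pair of columns, because after normalizing by Lemma \ref{equivalence1} three length-two rows of unit-norm $S$-entries would force $1=-1$. Together these force $A_i$, after the row and column permutations of Lemma \ref{equivalence1}, to be a direct sum of three isolated $2\times2$ blocks, each an $SH(2,S)$ and hence invertible over $\mathbb{R}[S]$.

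The second step assembles the three components on a common $3\times3$ grid of $2\times2$ blocks. Fullness and the type then force the variable carried by the block $M_{IJ}$ in block-position $(I,J)$ to define a $3\times3$ Latin square on $\{1,2,3\}$, so that any two block-rows carry, column by column, three \emph{distinct} products $x_ax_b$ with $a\neq b$. Writing out a vanishing off-diagonal block of $XX^*$, each such product then occurs with the single coefficient $M_{IJ}M_{I'J}^*$, whence invertibility gives $M_{IJ}=0$, a contradiction. The main obstacle is exactly the hypothesis that the three components share one block grid: their induced pairings of the rows need not coincide, and non-aligned supports partitioning the all-ones matrix genuinely exist. The heart of the argument is therefore to invoke the anti-amicability relations $A_iA_j^*=-A_jA_i^*$ to rule out these misaligned configurations — or, equivalently, to extract the contradiction directly from two carefully chosen rows of $X$; the block computation above is only the finishing move.
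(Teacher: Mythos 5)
Your first part, for type $(3,3)$, is exactly the paper's argument: specialize one variable to $1$ and the other to $0$ to get an $SW(6,3,S)$ and invoke Lemma \ref{SW6}. Your structural analysis of the weight-$2$ components is also sound: decomposing $X=\sum_i A_ix_i$ does give pairwise anti-amicable $SW(6,2,S)$'s with supplementary supports, and your argument that each such $A_i$ is, up to permutation, a direct sum of three invertible $2\times 2$ blocks is correct (and is more explicit than anything the paper writes down).

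The problem is that for type $(2,2,2)$ you never finish. Your block computation disposes only of the configuration in which all three components induce the \emph{same} partition of the rows into pairs and the \emph{same} partition of the columns into pairs, so that the variables form a $3\times3$ Latin square of $2\times2$ blocks. You yourself observe that the three induced pairings need not coincide and that "the heart of the argument is therefore to invoke the anti-amicability relations to rule out these misaligned configurations" --- but you do not carry this out, and the misaligned case is not vacuous. Indeed, the paper's proof explicitly exhibits and must kill such a configuration: its second canonical form has a common column pairing $\{1,2\},\{3,4\},\{5,6\}$ but three \emph{different} row pairings (the $a$-component pairs rows $\{1,2\},\{3,5\},\{4,6\}$, the $b$-component pairs $\{3,4\},\{1,6\},\{2,5\}$, the $c$-component pairs $\{5,6\},\{2,4\},\{1,3\}$), and the contradiction there comes from a different pair of rows (rows $3$ and $6$ after normalizing rows $2$) than in the aligned case. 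The paper's route is to reduce, by row and column permutations, to exactly two canonical forms and to refute each by a short orthogonality computation of the kind you use in the aligned case: normalize the two entries of one variable in some row to $1$, deduce from orthogonality that the corresponding entries in two other rows are negatives of each other, and conclude that those two rows cannot be orthogonal. Until you either enumerate and refute the misaligned overlays or supply the promised anti-amicability argument, the $(2,2,2)$ half of your proof is incomplete.
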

\begin{proof}{
If there is  $SOD\ (6; \ 3,3, S)$, then equating one of its variables to $0$ and the other one to $1$ results in $SW(6,3,S)$ which contradicts Lemma \ref{SW6}, so there is no  $SOD\ (6; \ 3,3, S)$.

Now suppose that $B$ is $SOD\ (6; \ 2,2,2, S)$. By Lemma \ref{equivalence1}, if one permutes the rows and columns of $B$, then one of the following forms obtains:
$${\footnotesize\left[ \begin {array}{cccccc} \epsilon_{11}a&\epsilon_{12}a&\epsilon_{13}b&\epsilon_{14}b&\epsilon_{15}c& \epsilon_{16}c\\ \epsilon_{21}a&\epsilon_{22}a&\epsilon_{23}b&\epsilon_{24}b&\epsilon_{25}c&\epsilon_{26}c\\
 \epsilon_{31}b&\epsilon_{32}b&\epsilon_{33}c&\epsilon_{34}c&\epsilon_{35}a&\epsilon_{36}a\\
 \epsilon_{41}b&\epsilon_{42}b&\epsilon_{43}c&\epsilon_{44}c&\epsilon_{45}a&\epsilon_{46}a\\
 \epsilon_{51}c&\epsilon_{52}c&\epsilon_{53}a&\epsilon_{54}a&\epsilon_{55}b&\epsilon_{56}b\\
\epsilon_{61}c&\epsilon_{62}c&\epsilon_{63}a&\epsilon_{64}a&\epsilon_{65}b&\epsilon_{66}b\end {array} \right] \ {\rm or} \  
\left[ \begin {array}{cccccc} \gamma_{11}a&\gamma_{12}a&\gamma_{13}b&\gamma_{14}b&\gamma_{15}c& \gamma_{16}c\\ \gamma_{21}a&\gamma_{22}a&\gamma_{23}c&\gamma_{24}c&\gamma_{25}b&\gamma_{26}b\\
 \gamma_{31}b&\gamma_{32}b&\gamma_{33}a&\gamma_{34}a&\gamma_{35}c&\gamma_{36}c\\
 \gamma_{41}b&\gamma_{42}b&\gamma_{43}c&\gamma_{44}c&\gamma_{45}a&\gamma_{46}a\\
 \gamma_{51}c&\gamma_{52}c&\gamma_{53}a&\gamma_{54}a&\gamma_{55}b&\gamma_{56}b\\
\gamma_{61}c&\gamma_{62}c&\gamma_{63}b&\gamma_{64}b&\gamma_{65}a&\gamma_{66}a\end {array} \right]\!,}
$$
where $\epsilon_{ij}, \gamma_{ij}\in S$ ($1\le i,j\le 6$), and $a,b,c$ are commuting variables. 

For the left matrix, consider $\epsilon_{11}=\epsilon_{12}=1,$ so as  in the proof of Theorem \ref{musteven}, orthogonality of the first row with the second and third rows forces $\epsilon_{21}$ to be $-\epsilon_{22}$ and $\epsilon_{31}$ to be $-\epsilon_{32}.$ Thus,
the second and third rows will not be orthogonal, which is a contradiction. 

For the right matrix, consider $\gamma_{21}=\gamma_{22}=1,$ so as in the proof of Theorem \ref{musteven}, orthogonality of the second row with the third and sixth rows forces $\gamma_{31}$ to be $-\gamma_{32}$ and $\gamma_{61}$ to be $-\gamma_{62}.$ Thus,
the third and sixth rows will not be orthogonal, which is a contradiction.
Hence, there is no $SOD\ (6; \ 2,2,2,S)$.}
\end{proof}

From Theorems \ref{musteven} and \ref{3tuple}, there is no $SOD\ \big(1 \cdot 3; \ 1,1,1, S\big)$, $SOD \ \big(2\cdot 3; \ 2,2,2, S\big)$ and $SOD \ \big(3 \cdot 3; \ 3,3,3, S\big)$. However, it is shown \cite{Ghader, Ghaderpour} that there exists $SOD\ \big(4 \cdot 3; \ 4,4,4, S\big)$ and more generally for any $k$-tuple $\big(u_1, \ldots, u_k\big)$ of positive integers, there exists $SOD\ \big(4u; \ 4u_1, \ldots,4u_k, S\big)$, where $u=\sum_{i=1}^k u_i$. An asymptotic existence result for full ODs is obtained in \cite{Ghader, Ghaderpour} by applying Theorem \ref{sodtood} to these full SODs, which in turn improved the  asymptotic existence result for Hadamard matrices obtained by Seberry \cite[Chapter 7]{GS} and Craigen \cite{Craigen}.

\section{Constructions for ODs using SODs of order $2^n$}
In this section, we construct SODs of order $2^n$ with $2^n$ variables over some signed groups, and by using some well-known theorems, we obtain a family of full CODs which, as we shall show, implies the existence of a family of full ODs. We use the notation $u_{(k)}$  to show $u$ repeats $k$ times. 
\begin{theorem}{\rm \cite[Chapter 1]{GS}.}\label{setanti}
For each positive integer $m$, there is a set $$A=\Big\{I_m, A_1, A_2, \ldots, A_{\rho(m)-1}\Big\}$$ of pairwise disjoint anti-amicable signed permutation matrices of order $m$, and equivalently there is $OD\big(m; \ 1_{(\rho(m))}\big)$, where $\rho(m)$ is the Radon-Hurwitz number.
 \end{theorem}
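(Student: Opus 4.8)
The plan is to prove the two assertions together by first making the stated equivalence precise and then constructing the matrix family. Suppose we are given a set $\big\{I_m, A_1,\ldots,A_{\rho(m)-1}\big\}$ of pairwise disjoint, pairwise anti-amicable signed permutation matrices, write $A_0:=I_m$, and form $X=\sum_{i=0}^{\rho(m)-1} x_i A_i$ with commuting variables $x_i$. Because the $A_i$ are pairwise disjoint, every entry of $X$ carries a single variable, so $X$ has entries in $\{0,\pm x_0,\ldots,\pm x_{\rho(m)-1}\}$. Expanding,
\[
XX^{\rm T}=\sum_{i} x_i^2\, A_iA_i^{\rm T}+\sum_{i<j} x_ix_j\big(A_iA_j^{\rm T}+A_jA_i^{\rm T}\big).
\]
Each $A_i$ is a signed permutation matrix, hence $A_iA_i^{\rm T}=I_m$, while anti-amicability $A_iA_j^{\rm T}=-A_jA_i^{\rm T}$ annihilates every cross term; thus $XX^{\rm T}=\big(\sum_i x_i^2\big)I_m$ and $X$ is an $OD\big(m;\,1_{(\rho(m))}\big)$. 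Conversely, reading off the coefficient matrices of any such OD recovers the set: disjointness is the requirement that each entry carry one variable, orthogonality of weight one forces each coefficient matrix to be a signed permutation matrix, and the vanishing of the cross terms is exactly anti-amicability. I would also note that a skew signed permutation matrix has zero diagonal, so it is automatically disjoint from $I_m$, and anti-amicability of $A_i$ with $I_m$ is equivalent to $A_i^{\rm T}=-A_i$; the two formulations therefore coincide, and it remains only to construct the family.

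Since $\rho(2^a b)=\rho(2^a)$ for odd $b$, I would first reduce to $m=2^a$: given a family of order $2^a$, tensoring each member with $I_b$ produces a family of order $m=2^ab$ of the same cardinality, and the Kronecker product preserves being a signed permutation matrix, disjointness, orthogonality, skew symmetry, and anti-amicability. It then suffices to treat $m=2^a$ with $a=4c+d$, $0\le d<4$ and $\rho(m)=8c+2^d$. For the powers of two I would use the $\{0,\pm1\}$ blocks
\[
P=\begin{pmatrix}0&1\\-1&0\end{pmatrix},\qquad Q=\begin{pmatrix}1&0\\0&-1\end{pmatrix},\qquad R=\begin{pmatrix}0&1\\1&0\end{pmatrix},
\]
where $P$ is skew and orthogonal, $Q,R$ are symmetric and orthogonal, and $PQ=-QP$, $PR=-RP$, $QR=-RQ$ with $P^2=-I_2$, $Q^2=R^2=I_2$. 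The base cases $d=0,1,2,3$ require $\rho-1=0,1,3,7$ anti-amicable skew orthogonal signed permutation matrices of orders $1,2,4,8$: for $d=1$ take $\{P\}$; for $d=2$ take $\{P\otimes I_2,\ Q\otimes P,\ R\otimes P\}$ (one checks directly that these three are skew, orthogonal, pairwise anticommuting, and have pairwise disjoint supports with zero diagonal); and for $d=3$ one takes the seven images of the generators in the order-$8$ real representation of the corresponding Clifford algebra.

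The recursion $a\mapsto a+4$ rests on $\rho(16m)=\rho(m)+8$ and the periodicity $\mathrm{Cl}_{0,8}\cong\mathbb{R}(16)$. Let $\{E_1,\ldots,E_8\}$ be order-$16$ mutually anti-amicable skew orthogonal signed permutation matrices (the generators of this representation). Their product $\Omega=E_1\cdots E_8$ satisfies $\Omega^{\rm T}=\Omega$, $\Omega^2=I_{16}$, and $\Omega E_t=-E_t\Omega$ for every $t$ (because $8$ is even). Given a family $\{A_1,\ldots,A_s\}$ of order $n$, I would lift it to
\[
\big\{E_t\otimes I_n:1\le t\le 8\big\}\ \cup\ \big\{\Omega\otimes A_i:1\le i\le s\big\}
\]
of order $16n$ and size $s+8$. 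Here $E_t\otimes I_n$ is skew since $E_t$ is, and $\Omega\otimes A_i$ is skew since $\Omega$ is symmetric and $A_i$ skew; orthogonality and the signed-permutation property pass through $\otimes$; and the three anticommutation checks follow from $(E_sE_t)\otimes I_n=-(E_tE_s)\otimes I_n$, from $\Omega^2\otimes A_iA_j=-\,\Omega^2\otimes A_jA_i$, and from $(E_t\otimes I_n)(\Omega\otimes A_i)=E_t\Omega\otimes A_i=-\Omega E_t\otimes A_i=-(\Omega\otimes A_i)(E_t\otimes I_n)$. Iterating from the four base cases reaches every $2^a$, and tensoring with $I_b$ completes the argument. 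The main obstacle I anticipate is \emph{disjointness}: anticommuting skew orthogonal matrices need not have disjoint supports, so the genuine work is to choose the base blocks and the Clifford generators $E_t$ as disjoint signed permutation matrices and to verify that disjointness is preserved along the Kronecker recursion (equivalently, that the assembled $X$ really does place a single variable in each entry); once this bookkeeping is controlled, the orthogonality relations follow from the algebra above.
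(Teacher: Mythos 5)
Your proposal is correct in outline, but it is worth noting that the paper does not actually prove this statement: Theorem~\ref{setanti} is quoted from Chapter~1 of Geramita--Seberry with no argument supplied, so there is no ``paper proof'' to match against. What you have written is essentially a reconstruction of the classical Radon--Hurwitz construction from that reference: the equivalence between the matrix family and $OD\big(m;\,1_{(\rho(m))}\big)$ is handled correctly (disjointness $\leftrightarrow$ one variable per entry, weight one $\leftrightarrow$ signed permutation, vanishing cross terms $\leftrightarrow$ anti-amicability, and anti-amicability with $I_m$ $\leftrightarrow$ skewness), the reduction to $m=2^a$ by tensoring with $I_b$ is right since $\rho(2^ab)=\rho(2^a)$, and the period-$4$ recursion via $\Omega=E_1\cdots E_8$ with $\Omega^{\rm T}=\Omega$, $\Omega^2=I_{16}$, $\Omega E_t=-E_t\Omega$ is the standard Clifford-periodicity step and all the sign computations check out. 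Two small remarks. First, the disjointness worry you flag at the end is less of an obstacle than you suggest: in the lift $\{E_t\otimes I_n\}\cup\{\Omega\otimes A_i\}$, each $A_i$ is skew and hence has zero diagonal, so $E_t\otimes I_n$ (supported on diagonal positions of the second factor) is automatically disjoint from $\Omega\otimes A_i$, and disjointness within each of the two subfamilies is inherited from disjointness of the $E_t$'s and of the $A_i$'s respectively; the only genuine remaining work is to exhibit the order-$8$ family ($d=3$) and the eight pairwise disjoint anti-amicable skew signed permutation matrices $E_1,\ldots,E_8$ of order $16$ explicitly, which you assert but do not write down (they exist and can be built by Kronecker products of $P$, $Q$, $R$, but this is exactly the bookkeeping that must be displayed). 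Second, your verified $d=2$ base case $\{P\otimes I_2,\ Q\otimes P,\ R\otimes P\}$ is a nice self-contained check and is the same mechanism that produces the higher base cases. With those two explicit families supplied, the argument is complete.
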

\begin{remark}\label{mut-anti}{\rm
Since the set $A$ in Theorem \ref{setanti} is a set of pairwise anti-amicable matrices, $A_i=-A_i^{\rm T}$ for $1\le i \le \rho(m)-1$, and so $A_i^2=-I_m$ and $A_iA_j=-A_jA_i$ for $1\le i\neq j \le \rho(m)-1$.
}\end{remark}
\begin{theorem}\label{SOD2n}
There is $SOD\ \big(2^n; 1_{(2^n)}, S\big)$ such that $S$ admits a remrep of degree $2^{2^{n-1}-1}$, $n>2$.
\end{theorem}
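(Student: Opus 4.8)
The plan is to realize the design as a ``group--developed'' matrix $X=\sum_{g}s_gP_g\,x_g$ over the additive group $\mathbb{Z}_2^n$, in which the \emph{supports} are carried by commuting symmetric permutation matrices and the \emph{signs} are carried by anticommuting units of $S$; these two ingredients are produced separately and then combined.

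First I would pin down $S$ and the ambient order. Set $m:=2^{2^{n-1}-1}$ and write $m=2^r$ with $r=2^{n-1}-1$. Since $n>2$ we have $r\equiv 3\pmod 4$, so in the Radon--Hurwitz formula $\rho(2^r)=8c+2^d$ with $r=4c+d$ we are in the case $d=3$, giving $\rho(m)=8(c+1)=2^n$ (this numeric identity is the reason the exponent $2^{n-1}-1$ appears, and it is exactly where $n>2$ is used). Applying Theorem~\ref{setanti} to $m$ yields a set $\{I_m,A_1,\dots,A_{2^n-1}\}$ of $2^n$ pairwise disjoint, pairwise anti-amicable signed permutation matrices of order $m$, and by Remark~\ref{mut-anti} these satisfy $A_i^2=-I_m$ and $A_iA_j=-A_jA_i$ for $i\neq j$. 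I would then let $S$ be the signed subgroup of $SP_m$ generated by $A_1,\dots,A_{2^n-1}$, with distinguished central element $-I_m$; the inclusion $S\hookrightarrow SP_m$ is by definition a remrep of $S$ of degree $m=2^{2^{n-1}-1}$, which settles the representation-theoretic half of the statement. Writing $s_i:=A_i\in S$ and $s_0:=1$, the only facts I will use afterwards are $\overline{s_i}=s_i^{-1}=-s_i$ and $s_is_j=-s_js_i$ for distinct nonzero indices.

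Next I would build the carrier permutations. Index rows and columns by $\mathbb{Z}_2^n$ and let $P_g$ be the permutation matrix of the translation $h\mapsto h+g$. These are involutory and \emph{symmetric} ($P_g^{\mathrm T}=P_g$), they commute with $P_gP_{g'}=P_{g+g'}$, and for distinct $g$ their supports are disjoint and partition all $2^n\times 2^n$ positions. I would attach a distinct generator $s_g$ to each nonzero $g\in\mathbb{Z}_2^n$ (there are exactly $2^n-1$ of each, a perfect match), set $B_g:=s_gP_g$, and form $X:=\sum_{g\in\mathbb{Z}_2^n}s_gP_g\,x_g$. Fullness is then immediate: the entry of $X$ in position $(h,h')$ comes from the unique $g=h+h'$, so it equals $s_{h+h'}x_{h+h'}\neq 0$.

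It remains to verify $XX^{*}=\big(\sum_g x_g^2\big)I_{2^n}$, which reduces to showing the $B_g$ are norm-one and pairwise anti-amicable. Because the real $0/1$ entries of each $P_g$ are central in $\mathbb{R}[S]$, they pass through the signed-group scalars, giving $B_gB_{g'}^{*}=s_g\overline{s_{g'}}\,P_gP_{g'}^{\mathrm T}=-s_gs_{g'}P_{g+g'}$, while $B_{g'}B_g^{*}=-s_{g'}s_gP_{g+g'}=+s_gs_{g'}P_{g+g'}$, so the off-diagonal contributions cancel; the diagonal terms give $B_gB_g^{*}=s_g\overline{s_g}P_g^2=I_{2^n}$. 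Expanding $XX^{*}=\sum_{g,g'}B_gB_{g'}^{*}x_gx_{g'}$ then collapses to $\big(\sum_g x_g^2\big)I_{2^n}$, exhibiting $X$ as a (full) $SOD\big(2^n;1_{(2^n)},S\big)$. The genuine obstacle here is conceptual rather than computational: over the reals one cannot fit $2^n$ anti-amicable matrices into order $2^n$ (only $\rho(2^n)<2^n$ fit), so the missing anticommutation must be supplied by the signed group. The idea that makes this work is the clean separation above---letting the symmetric, commuting, support-disjoint matrices $P_g$ handle the combinatorics while the skew, anticommuting units $s_g$ handle the signs---together with the coincidence $\rho\big(2^{2^{n-1}-1}\big)=2^n$, which guarantees that precisely $2^n-1$ such units are available.
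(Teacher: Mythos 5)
Your proposal is correct and is essentially the paper's own construction: the translation matrices $P_g$ indexed by $\mathbb{Z}_2^n$ are exactly the paper's $n$-fold Kronecker products of $I$ and $P$, the signed group generated by the Radon--Hurwitz family $\{A_1,\dots,A_{2^n-1}\}$ of order $2^{2^{n-1}-1}$ with the inclusion as remrep is the paper's $T$, and the design $\sum_g s_gP_g x_g$ with the same cancellation argument is the paper's $D=\sum_i A_ix_iB_i$. Your explicit verification that $\rho\bigl(2^{2^{n-1}-1}\bigr)=2^n$ is a welcome detail the paper leaves to the reader.
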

\begin{proof}
Let $m=2^{2^{n-1}-1}$. It is not hard to see that $\rho(m)=2^n$, for $n>2$. By Theorem \ref{setanti}, there is a set $A=\big\{A_0, A_1, A_2, \ldots, A_{2^n-1}\big\}$ of pairwise disjoint anti-amicable signed permutation matrices of order $m$, where $A_0=I_m$. Let 
\begin{align}\label{signedT}
T=\big<A_1, \ldots, A_{2^n-1}\big>.
\end{align} 
It can be seen that $T$ is a signed subgroup of $SP_m$. Thus, for each $M\in T$, $\overline{M}=M^{-1}=M^{\rm T}$. 
Now let $B=\big\{B_0,B_1,\ldots, B_{2^n-1}\big\}$ be a set of supplementary matrices obtained from all possible $n$-fold Kronecker products of $I$ and $P$, where
$$I= \left[ \begin {array}{cc} 1&0\\ 0&1\end {array} \right]\ {\rm and} \ P= \left[ \begin {array}{cc} 0&1\\ 1&0\end {array} \right]\!.$$
It is easy to see that the matrices in the set $B$ are  pairwise amicable of order $2^n$. Now we show that
$$D=\sum_{i=0}^{2^n-1}A_ix_iB_i,$$ 
is $SOD\ \big(2^n; 1_{(2^n)}, T\big)$, where the $x_i$'s are commuting variables. Note that the $A_ix_i$ is treated as a scalar multiplied on every entry in the $B_i$. We have
\begin{align*}
DD^*&=D\overline{D}^{\rm T}=\bigg(\sum_{i=0}^{2^n-1}A_ix_iB_i\bigg)\bigg(\sum_{i=0}^{2^n-1}\overline{A_i}x_iB_i^{\rm T}\bigg)\\&= 
\sum_{i=0}^{2^n-1}(A_ix_iB_i)(\overline{A_i}x_iB_i^{\rm T}) + \sum_{i=0}^{2^n-1}\sum_{j=i+1}^{2^n-1}\Big((A_ix_iB_i)(\overline{A_j}x_jB_j^{\rm T}) + (A_jx_jB_j)(\overline{A_i}x_iB_i^{\rm T})\Big)\\&=\sum_{i=0}^{2^n-1}A_i\overline{A_i}x_i^2B_iB_i^{\rm T}+
\sum_{i=0}^{2^n-1}\sum_{j=i+1}^{2^n-1}(A_i\overline{A_j}x_ix_jB_iB_j^{\rm T}+ A_j\overline{A_i}x_jx_iB_jB_i^{\rm T})\\&=\sum_{i=0}^{2^n-1}I_mx_i^2I_{2^n}+
\sum_{i=0}^{2^n-1}\sum_{j=i+1}^{2^n-1}(A_i\overline{A_j}x_ix_jB_iB_j^{\rm T}- A_i\overline{A_j}x_ix_jB_iB_j^{\rm T})\\&= \Big(\sum_{i=0}^{2^n-1}x_i^2\Big)I_{2^n}.
\end{align*}
Therefore, $D$ is $SOD\ \big(2^n; 1_{(2^n)}, T\big)$. One may choose the identity map $\pi$ from $T$ to $T$ which is clearly a remrep of degree $m$, and so $D$ is an SOD over  $T$ admitting the remrep $\pi$ of degree $m$.
\end{proof}

\begin{example}{\rm
We show that there is $SOD\ \big(2^3; 1_{(2^3)}, S\big)$ such that $S$ admits a remrep of degree $2^3$. Let 
\begin{align*}
B_0&=I\otimes I\otimes I, \ \ \ \ \ \ \ \ B_4=I\otimes P\otimes P, \\ B_1&=I\otimes I\otimes P , \ \ \ \ \ \ \  B_5=P\otimes I\otimes P, \\ B_2&=I\otimes P\otimes I, \ \ \ \ \ \ B_6=P\otimes P\otimes I, \\ B_3&=P\otimes I\otimes I, 
 \ \  \ \ \ \ B_7=P\otimes P\otimes P.
\end{align*}
Let $S$ be the signed group in \eqref{signedT} with $n=3$. It can be verified that $D=\sum_{i=0}^7 A_ix_iB_i $ is the desired SOD, where the $x_i$'s are commuting variables. Note that $S$ admits a remrep of degree $8$ that is related to the existence of full ODs of type $(1_{(8)})$. From Theorem \ref{setanti}, for $m>8$, there cannot exist $m$ pairwise disjoint anti-amicable signed permutation matrices of order $m$.}
\end{example}

\begin{remark}{\rm 
In the proof of Theorem \ref{SOD2n}, noting Remark \ref{mut-anti}, one may let
\begin{align}\label{signedS}
S=\big<s_1, \ldots, s_{2^n-1}\colon\, s_{\alpha}^2=-1,  s_{\alpha}s_{\beta}=-s_{\beta}s_{\alpha}, 1\le \alpha\neq \beta\le 2^n-1\big>,
\end{align}
and define a map $\phi: \ S\longrightarrow T$ by $\phi(s_{\alpha})=A_{\alpha}$ for $1\le \alpha\le 2^n-1$ such that multiplication is preserved \cite[Section 2.3]{OB}. Therefore, 
$D=\sum_{i=0}^{2^n-1}s_ix_iB_i$, where $s_0=1_S$, is also $SOD\ \big(2^n; 1_{(2^n)}, S\big)$ such that $S$ admits the remrep $\phi$ of degree $m$. 
}\end{remark}

\begin{remark}{\rm
Comparing the maximum number of variables in an OD with the number of variables in the SOD constructed in Theorem \ref{SOD2n}, one can observe that the maximum number of variables in an SOD over a signed group depends on the type of signed group. Determination of the maximum number of variables in an SOD over different signed groups is an interesting and challenging problem. 
}\end{remark}

Craigen \cite{Craigen} showed that if there is $SH(n,S)$ such that $S$ admits a remrep of degree $m$, then there is a Hadamard matrix of order $mn$, where $m$ is the order of a Hadamard matrix. Following his lead, Ghaderpour \cite{Ghader, Ghaderpour} showed the following theorem.

\begin{theorem}{\rm \cite[Theorem 6.26]{Ghaderpour}.}\label{sodtood}
If there is $SOD\ \big(n; \ u_1, \ldots, u_k, S\big)$ such that $S$ admits a remrep $\pi$ of degree $m$,  then there is $OD\ \big(mn; \ mu_1, \dots, mu_k\big)$, where $m$ is the order of a Hadamard matrix.  
\end{theorem}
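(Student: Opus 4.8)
The plan is to mimic Craigen's substitution for signed group Hadamard matrices, but to keep track of the variables and of the resulting weights. Write the given $SOD\ \big(n;\ u_1,\ldots,u_k,S\big)$ as a matrix $X=[X_{ab}]_{a,b=1}^n$ whose nonzero entries have the form $X_{ab}=\epsilon_{ab}x_{i(a,b)}$ with $\epsilon_{ab}\in S$; by hypothesis $XX^{*}=\big(\sum_{i=1}^k u_i x_i^2\big)I_n$, and we are handed a Hadamard matrix $H$ of order $m$, so $HH^{\rm T}=mI_m$.

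First I would extend the remrep $\pi$ entrywise to matrices over the ring $\mathbb{R}[x_1,\ldots,x_k][S]$, by setting $\pi(\epsilon x_i)=\pi(\epsilon)x_i$ and $\pi(0)=0$ blockwise and treating the commuting variables as central scalars. The two facts I need are that this extension is multiplicative, $\pi(UV)=\pi(U)\pi(V)$ at the block-matrix level, which follows from $\pi$ being a homomorphism together with the variables factoring out as scalars, and that it turns conjugate-transpose into ordinary transpose: since $\pi(\epsilon)$ is a signed permutation matrix it is orthogonal, so $\pi(\overline{\epsilon})=\pi(\epsilon^{-1})=\pi(\epsilon)^{-1}=\pi(\epsilon)^{\rm T}$, whence $\pi(X)^{\rm T}=\pi(X^{*})$. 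Consequently $\pi(X)$ is a real matrix of order $mn$ with entries in $\{0,\pm x_1,\ldots,\pm x_k\}$ and $\pi(X)\pi(X)^{\rm T}=\pi(XX^{*})=\big(\sum_{i=1}^k u_i x_i^2\big)I_{mn}$.

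Next, define the candidate design $Y:=\pi(X)\,(I_n\otimes H)$, i.e.\ right-multiply each $m\times m$ block of $\pi(X)$ by $H$; equivalently, replace each nonzero entry $\epsilon_{ab}x_{i(a,b)}$ of $X$ by the block $\pi(\epsilon_{ab})H\,x_{i(a,b)}$ and each zero entry by the zero block. I would then verify the two defining properties of an OD. For the entries and type, $\pi(\epsilon_{ab})H$ is a $\{\pm1\}$-matrix (each of its rows is $\pm$ a row of $H$), so each nonzero block contributes, in any one of its rows, exactly $m$ entries equal to $\pm x_{i(a,b)}$; summing over the $n$ blocks of a block-row and using that $x_i$ occurs $u_i$ times in the corresponding row of $X$ shows every row of $Y$ contains exactly $mu_i$ entries $\pm x_i$, so $Y$ has type $(mu_1,\ldots,mu_k)$. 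For orthogonality I would compute
$$
YY^{\rm T}=\pi(X)(I_n\otimes H)(I_n\otimes H)^{\rm T}\pi(X)^{\rm T}=\pi(X)(I_n\otimes HH^{\rm T})\pi(X)^{\rm T}=m\,\pi(X)\pi(X)^{\rm T},
$$
and then substitute $\pi(X)\pi(X)^{\rm T}=\big(\sum_{i=1}^k u_i x_i^2\big)I_{mn}$ from the previous paragraph to obtain $YY^{\rm T}=\big(\sum_{i=1}^k mu_i x_i^2\big)I_{mn}$, which exhibits $Y$ as the desired $OD\ \big(mn;\ mu_1,\ldots,mu_k\big)$.

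The routine bookkeeping (that $\pi$ is additive and multiplicative at the block-matrix level and that the commuting variables pass freely through all products) is where care is needed, but I expect the genuine crux to be the identity $\pi(X)^{\rm T}=\pi(X^{*})$: this is exactly the point at which all of the remrep axioms are used, since it requires $\pi$ to be a homomorphism sending $-1$ to $-I_m$ and mapping each signed group element to an orthogonal signed permutation matrix, so that the signed group ring conjugation $\overline{\,\cdot\,}$ becomes matrix transposition. Once that is in hand, the Hadamard identity $HH^{\rm T}=mI_m$ supplies precisely the factor $m$ that both fills in the zero entries and scales the type from $(u_1,\ldots,u_k)$ to $(mu_1,\ldots,mu_k)$.
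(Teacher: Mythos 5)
Your proof is correct and takes essentially the same route as the cited proof (Craigen's substitution, as adapted in Ghaderpour's Theorem 6.26, which the paper states without reproving): extend the remrep blockwise so that conjugate-transpose becomes transpose, then right-multiply each block by a Hadamard matrix of order $m$ so that $HH^{\rm T}=mI_m$ fills the zeros of the signed permutation blocks and scales the type by $m$. The computation $YY^{\rm T}=m\,\pi(X)\pi(X)^{\rm T}=\big(\sum_i mu_ix_i^2\big)I_{mn}$ and the entry check are exactly the standard argument.
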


Using the remrep from $S_{\mathbb C}$ to $SP_2$ defined in \cite{Ghader, Ghaderpour}, we have the following corollary. 
\begin{corollary}\label{CODtoOD}
If there exists $COD\ \big(n; \ u_1, \ldots, u_k\big)$, then there exists $OD\ \big(2n; \ 2u_1, \ldots, 2u_k\big)$.
\end{corollary}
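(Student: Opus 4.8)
The plan is to recognize the final statement as the special case $S = S_{\mathbb{C}}$ of Theorem \ref{sodtood}, so that essentially no new work is required beyond identifying the correct signed group and the degree of its remrep. The key observation, recorded in the remark following the definition of SOD, is that a complex orthogonal design is precisely a signed group orthogonal design over the complex signed group $S_{\mathbb{C}}=\{\pm 1, \pm i\}$: the defining identity $XX^* = \big(\sum_j c_j x_j^2\big)I_n$ for a COD is exactly the SOD identity once one reads the entries $\pm x_j, \pm i x_j$ as elements of $S_{\mathbb{C}}$ multiplied on the right by the variables. Hence a given $COD\ \big(n; \ u_1, \ldots, u_k\big)$ is the same object as an $SOD\ \big(n; \ u_1, \ldots, u_k, S_{\mathbb{C}}\big)$.

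The second step is to supply the remrep of $S_{\mathbb{C}}$ and compute its degree. I would exhibit the map $\phi\colon S_{\mathbb{C}} \to SP_2$ determined by
\begin{align*}
\phi(i) = \left[ \begin{array}{rr} 0 & -1 \\ 1 & 0 \end{array} \right], \qquad \phi(-1) = -I_2.
\end{align*}
One checks that $\phi(i)^2 = -I_2 = \phi(-1)$, consistent with $i^2 = -1$ in $S_{\mathbb{C}}$, so $\phi$ extends to a multiplicative map sending $-1$ to $-I_2$; this is exactly a remrep, and its degree is $m=2$. This is the remrep referred to in the statement, constructed in the cited references \cite{Ghader, Ghaderpour}.

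The final step is to apply Theorem \ref{sodtood} to the $SOD\ \big(n; \ u_1, \ldots, u_k, S_{\mathbb{C}}\big)$ obtained in the first step, using the remrep $\phi$ of degree $m=2$. Since $m=2$ is the order of a Hadamard matrix, namely the standard matrix ${\rm circ}\,\big(1,1\big)$ together with the appropriate sign, the hypothesis of Theorem \ref{sodtood} is satisfied, and the theorem yields an $OD\ \big(2n; \ 2u_1, \ldots, 2u_k\big)$, as claimed.

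There is no genuine obstacle here; the content of the corollary lies entirely in the machinery already assembled in Theorem \ref{sodtood}. The only points requiring care are the identification of a COD with an SOD over $S_{\mathbb{C}}$ and the verification that $\phi$ is indeed a degree-$2$ remrep with $m=2$ a Hadamard order, both of which are routine.
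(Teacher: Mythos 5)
Your proposal is correct and follows exactly the paper's route: identify the $COD\big(n;\,u_1,\ldots,u_k\big)$ with an $SOD\big(n;\,u_1,\ldots,u_k,S_{\mathbb C}\big)$, use the degree-$2$ remrep of $S_{\mathbb C}$ into $SP_2$, and apply Theorem \ref{sodtood} with $m=2$ (a Hadamard order). The only cosmetic slip is describing the order-$2$ Hadamard matrix as ${\rm circ}(1,1)$ ``with the appropriate sign''---the standard choice is $\left[\begin{smallmatrix}1&1\\1&-1\end{smallmatrix}\right]$, which is not circulant---but this does not affect the argument.
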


Craigen, Holzmann and Kharaghani in \cite{CHK} showed that if $g_1$ and $g_2$ are complex Golay numbers and $g$ is an even Golay number, then $gg_1g_2$ is a complex Golay number. Using this fact, they showed the following theorem.
\begin{theorem}\label{CGN2}
All numbers of the form $m=2^{a+u}3^b5^c11^d13^e$ are complex Golay numbers, where $a,b,c,d,e$ and $u$ are non-negative integers such that $b+c+d+e\le a+2u+1$ and $u\le c+e.$
\end{theorem}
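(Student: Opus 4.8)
The plan is to reduce the statement to a purely multiplicative bookkeeping problem governed by the quoted result of Craigen, Holzmann and Kharaghani, namely that $g\,g_1g_2$ is a complex Golay number whenever $g$ is an even Golay number and $g_1,g_2$ are complex Golay numbers. The building blocks I would use are the known small complex Golay numbers $1,3,5,11,13$, the fact that $2$ is a real (hence complex) Golay number, and the known even Golay numbers $2$, $10=2\cdot5$ and $26=2\cdot13$, for which real Golay pairs exist. The single observation that drives everything is that each factor $10$ or $26$ is an \emph{even} Golay number which simultaneously carries a factor of $2$ together with a factor of $5$ or $13$; this is what couples the power of two to the primes $5$ and $13$ and thereby to both hypotheses.

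First I would isolate the combinatorial core as a lemma: if $m=\big(\prod_{i=1}^{P}C_i\big)\big(\prod_{j=1}^{Q}E_j\big)$ with $P\ge1$, each $C_i$ a complex Golay number, each $E_j$ an even Golay number, and $Q\ge P-1$, then $m$ is a complex Golay number. I would prove this by induction on $P$. For the base case $P=1$ I would take $g_1=1$ in the multiplication theorem, so that multiplying a complex Golay number by an even Golay number again yields a complex Golay number; applying this once for each $E_j$ absorbs all the even factors into $C_1$. For the inductive step with $P\ge2$ and $Q\ge P-1\ge1$, I would apply the theorem to $g=E_1$, $g_1=C_1$, $g_2=C_2$ to form the complex Golay number $C'=E_1C_1C_2$, leaving the product of the $P-1$ complex factors $C',C_3,\dots,C_P$ and the $Q-1$ even factors, for which $Q-1\ge(P-1)-1$. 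The induction hypothesis then closes the step.

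With the lemma in hand, the remaining task is to present $m=2^{a+u}3^b5^c11^d13^e$ in the required form. Using the hypothesis $u\le c+e$, I would write $u=u_1+u_2$ with $u_1\le c$ and $u_2\le e$, declare the even factors to be $a$ copies of $2$, $u_1$ copies of $10$ and $u_2$ copies of $26$, and declare the complex factors to be $b$ copies of $3$, $d$ copies of $11$, $c-u_1$ copies of $5$ and $e-u_2$ copies of $13$ (inserting a single trivial factor $1$ in the degenerate case where no complex factors remain, so that $P\ge1$ always holds). A direct check of each prime exponent shows the product equals $m$, since the $u_1$ copies of $10$ and $u_2$ copies of $26$ supply exactly the $u$ extra twos along with $u_1$ fives and $u_2$ thirteens. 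Here $P=b+c+d+e-u$ and $Q=a+u$, so the lemma's hypothesis $Q\ge P-1$ rearranges into $b+c+d+e\le a+2u+1$, which is precisely the second assumption; the first assumption $u\le c+e$ is exactly what makes the split $u=u_1+u_2$ feasible.

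I expect the main obstacle to be conceptual rather than technical: recognizing that the power of two admits two genuinely different sources, the plain even Golay factor $2$ and the ``disguised'' twos hidden inside $10$ and $26$, and seeing that the two stated inequalities are exactly the feasibility condition for the split $u=u_1+u_2$ and the counting condition ``at most one more complex atom than even atom.'' Once this dictionary is set up, the verification of the prime exponents is routine arithmetic, and the induction only ever invokes the quoted multiplication theorem.
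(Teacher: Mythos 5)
Your proposal is correct, and it follows exactly the route the paper indicates: the paper gives no proof of its own for this theorem, citing Craigen--Holzmann--Kharaghani and noting only that the result follows from the multiplication fact that $gg_1g_2$ is a complex Golay number whenever $g$ is an even Golay number and $g_1,g_2$ are complex Golay numbers. Your induction lemma, the splitting $u=u_1+u_2$ with $u_1\le c$, $u_2\le e$ using the even Golay numbers $2$, $10$, $26$ and the complex Golay numbers $1,3,5,11,13$, and the identification of $Q\ge P-1$ with $b+c+d+e\le a+2u+1$ is a faithful and complete reconstruction of that argument.
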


Following similar techniques to \cite{Kharaghani}, we show the following theorem.
\begin{theorem}\label{s2}
Suppose that $r$ is a Golay number, and $k_1, k_2, \ldots, k_{2^{n-3}-1}$ are complex Golay numbers, where $n>2$. If $m=2\sum_{j=1}^{2^{n-3}-1}k_j+r+1$, then there is 
$$COD\ \Big(2^qm; \ 2^q, 2^qr, 2^{q+1}k_1, \ldots, 2^{q+1}k_{2^{n-3}-1}\Big),$$ where $q=2^{n-1}+n-1$.
\end{theorem}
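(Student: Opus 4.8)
The plan is to realize the desired COD by substituting complementary sequences, built from the given real and complex Golay pairs, into the $2^n$ variables of the full signed group orthogonal design $D=SOD(2^n;1_{(2^n)},S)$ produced in Theorem \ref{SOD2n}, and then to push the construction through the remrep of $S$. Recall that $S$ admits a remrep $\phi\colon S\to SP_{m'}$ of degree $m'=2^{2^{n-1}-1}$ and that $2^n m'=2^{2^{n-1}+n-1}=2^q$; this is exactly the factor by which the order and each type entry will be multiplied at the very end. I would therefore first aim at an intermediate \emph{full} SOD of order $2^n m$ and type $(2^n,\,2^n r,\,2^{n+1}k_1,\ldots,2^{n+1}k_{2^{n-3}-1})$ over the signed group $\langle S,i\rangle$ generated by $S$ together with the complex unit $i$, and only afterwards apply the complex-extended remrep.

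The core of the argument is the assembly of that intermediate SOD. Since $2^n=8\cdot 2^{n-3}$, I would partition the $2^n$ variable slots of $D$ into $2^{n-3}$ blocks of eight. One distinguished block hosts the single unit entry (of length $1$) together with the real Golay pair $(A;B)$ of length $r$, producing the two variables of weights $2^n$ and $2^n r$; each of the remaining $2^{n-3}-1$ blocks hosts one complex Golay pair $(C_j;D_j)$ of length $k_j$, producing a single variable of weight $2^{n+1}k_j$. Inside each block the two members of a pair are placed in complementary positions and substituted as circulants of order $m$ supported on the positions belonging to that block, so that the length contributions $1$, $r$, and $2k_j$ sum to $m$ and fix the diagonal to $\bigl(\sum u_i x_i^2\bigr)I$, while the zero autocorrelation of each pair cancels the intra-block off-diagonal terms, exactly as in the circulant constructions of \cite{CHK, Kharaghani}. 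The pairwise anti-amicability of the matrices $A_i$ and the supplementary amicability of the matrices $B_i$ underlying $D$ (Theorem \ref{SOD2n} and Remark \ref{mut-anti}) then cancels all inter-block cross terms, just as those cross terms cancelled in the proof of Theorem \ref{SOD2n}.

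Granting the intermediate SOD, the proof concludes by extending $\phi$ to $\langle S,i\rangle$ via $i\mapsto i$, which keeps the complex units intact; applying it entrywise (as in Theorem \ref{sodtood}, but now landing in complex monomial matrices with entries in $\{0,\pm 1,\pm i\}$) multiplies the order $2^n m$ by $m'$ to give $2^q m$ and multiplies each type entry by $m'$, yielding $COD(2^q m;\,2^q,2^q r,2^{q+1}k_1,\ldots,2^{q+1}k_{2^{n-3}-1})$. I expect the main obstacle to be the intermediate assembly, specifically choosing the placement of each pair inside its block so that the autocorrelation cancellation and the signed-group cancellation hold simultaneously, and so that the real Golay pair together with the unit entry yields the asymmetric weights $(2^n,2^n r)$ rather than a single doubled weight. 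This asymmetry reflects the difference between the self-conjugate real sequences and the complex pairs, for which both a sequence and its conjugate reverse are needed; getting the unit/Golay block to split into two separate variables of the correct weights is the delicate point, after which the final remrep step and the order and weight arithmetic are routine.
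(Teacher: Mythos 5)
There is a genuine gap, and it sits exactly where you predicted: the ``intermediate assembly.'' Your block scheme does not produce the claimed weights, and the cross-term cancellation you invoke is not available for the matrices you propose to substitute. Concretely: if the $2^n$ variables of $D$ are partitioned into $2^{n-3}$ blocks of eight and each block is filled with circulants of order $m$ supported only on that block's $1+r$ or $2k_j$ positions, then the variable $y$ occurs at most $8$ times per row of the resulting matrix (once for each slot of its block), so its weight is at most $2^3$, never $2^n$ for $n>3$; likewise the total weight is $8\bigl(1+r+2\sum_j k_j\bigr)=8m$ rather than $2^nm$, so the result cannot be a full design of order $2^nm$ of the stated type. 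Separately, when you replace the commuting variables $x_i$ of $D$ by matrices $M_i$, the cross terms become $A_i\overline{A_j}B_iB_j^{\rm T}\,M_iM_j^{*}+A_j\overline{A_i}B_jB_i^{\rm T}\,M_jM_i^{*}$, and the anti-amicability $A_i\overline{A_j}=-A_j\overline{A_i}$ only kills them if $M_iM_j^{*}=M_jM_i^{*}$. Complex circulants commute, but $M_jM_i^{*}=(M_iM_j^{*})^{*}$, so amicability holds only when the products are Hermitian --- which is not automatic for circulants built from complex Golay sequences. Your sketch neither arranges this nor flags it.

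The paper's proof resolves both problems with devices absent from your proposal. It does not distribute the sequences over disjoint supports: it concatenates \emph{all} of them (with the variables $y,z,x_j$ attached) into two full-length-$m$ arrays $E$ and $F$, signs their entries by the rows of a Hadamard matrix of order $2^{n-2}$ to get $2^{n-1}$ circulants $E_j,F_j$ of order $m$ satisfying $\sum_j\bigl(E_jE_j^{*}+F_jF_j^{*}\bigr)=2^{n-1}\bigl(y^2+rz^2+2\sum k_jx_j^2\bigr)I_m$, and then splits each into Hermitian and skew-Hermitian parts, $E_j'=\tfrac12(E_j+E_j^{*})$ and $E_j''=\tfrac{i}{2}(E_j-E_j^{*})$, to obtain $2^n$ complementary \emph{Hermitian} circulant matrices; Hermitian circulants of the same order are automatically pairwise amicable, which is what makes the substitution legitimate. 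Finally, instead of extending the remrep to $\langle S,i\rangle$ and proving a complex analogue of Theorem \ref{sodtood} (which you assume but the paper never establishes), the paper first applies Theorems \ref{SOD2n} and \ref{sodtood} to get a \emph{real} $OD\bigl(2^q;\,2^{2^{n-1}-1}_{(2^n)}\bigr)$ and only then substitutes the $2^n$ Hermitian circulants into its variables; the complex entries enter solely through the circulants. To repair your argument you would need to abandon the disjoint-support block decomposition, introduce the Hermitianization step (or some substitute guaranteeing amicability), and either prove the complex-remrep version of Theorem \ref{sodtood} or reorder the steps as the paper does.
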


\begin{proof}
Let $n$ be a positive integer greater than $2$. Suppose that $H$ is a Hadamard matrix of order $2^{n-2}$, $(A;B)$ is a Golay pair of length $r$, and $\big(C^{(j)};D^{(j)}\big)$ is a complex 
Golay pair of length $k_j$ $\big(1\le j\le 2^{n-3}-1\big)$. Let $m=2\sum_{j=1}^{2^{n-3}-1}k_j+r+1$. Consider the following two symbolic arrays:
\begin{align*}
E&=\Big(y,x_1C^{(1)},\ldots, x_{2^{n-3}-1}C^{(2^{n-3}-1)},zA,x_{2^{n-3}-1}C^{(2^{n-3}-1)}_{\overline{R}}, \ldots, x_1C^{(1)}_{\overline{R}}\Big), \\
F&=\Big(y,x_1D^{(1)},\ldots, x_{2^{n-3}-1}D^{(2^{n-3}-1)},zB,x_{2^{n-3}-1}D^{(2^{n-3}-1)}_{\overline{R}}, \ldots, x_1D^{(1)}_{\overline{R}}\Big),
\end{align*}
where the $x_j$'s, $y$ and $z$ are commuting variables. 
Let $e$ be the $2^{n-2}$-dimensional column vector of ones. At this point, the sequences $A$, $B$, $C^{(j)}$'s and $D^{(j)}$'s are treated as scalars, so $E$ and $F$ can be seen as row vectors of dimension $2^{n-2}$, and so $eE$ and $eF$ are square matrices of order $2^{n-2}$ whose entries are these sequences multiplied by the variables. Let $\odot$ denotes entrywise multiplication. For each $j$, $1\le j\le 2^{n-2}$, let $E_j$ and $F_j$  be the circulant matrices of order $m$ whose first rows are the expanded $j$-th rows of $eE\odot H$ and $eF\odot H$,  respectively. In other words, the rows of $eE\odot H$ and $eF\odot H$ have a similar form as the arrays $E$ and $F$ in which expanding the sequences $A$, $B$, $C^{(j)}$'s and $D^{(j)}$'s results in row vectors of dimension $m$.  It can be verified (see \cite{Kharaghani}) that 
\begin{align}\label{Wilcom}
\sum_{j=1}^{2^{n-2}}\big(E_jE_j^*+F_jF_j^*\big)=2^{n-1}\bigg(y^2+rz^2+2\sum_{j=1}^{2^{n-3}-1}k_jx_j^2\bigg)I_m.
\end{align}
For each $j$, $1\le j\le 2^{n-2}$, let 
$$E'_j=\dfrac{1}{2}\big(E_j+E_j^*\big), \ \ E''_j=\dfrac{i}{2}\big(E_j-E_j^*\big), \ \ F'_j=\dfrac{1}{2}\big(F_j+F_j^*\big), \ \ F''_j=\dfrac{i}{2}\big(F_j-F_j^*\big).$$
Note that the coefficients of elements of the $E'_j$'s, $E''_j$'s, $F'_j$'s and $F''_j$'s are in $\{0,\pm 1, \pm i\}$ because of the form of the arrays $E$ and $F$. Now it can be seen that the set 
\begin{align*}
\Omega=\Big\{E'_j-E''_j, E'_j+E''_j, F'_j-F''_j, F'_j+F''_j; \ \ 1\le j\le 2^{n-2}\Big\}
\end{align*}
consists of $2^n$ Hermitian circulant matrices. Moreover,

\begin{align*}
\sum_{j=1}^{2^{n-2}}&\Big(\big(E'_j-E''_j\big)\big(E'_j-E''_j\big)^*+\big(E'_j+E''_j\big)\big(E'_j+E''_j\big)^*\\ &+\big(F'_j-F''_j\big)\big(F'_j-F''_j\big)^*+\big(F'_j+F''_j\big)\big(F'_j+F''_j\big)^*\Big)\\&=
2\sum_{j=1}^{2^{n-2}}\Big(E'^{2}_j+E''^{2}_j+F'^{2}_j+F''^{2}_j\Big)\\ &=
\dfrac{1}{2}\sum_{j=1}^{2^{n-2}}\Big(\big(E_j+E_j^*\big)^2-\big(E_j-E_j^*\big)^2+\big(F_j+F_j^*\big)^2-\big(F_j-F_j^*\big)^2\Big) \\ &=
2\sum_{j=1}^{2^{n-2}}\big(E_jE_j^*+F_jF_j^*\big) \ \ \ \ \ \ \ \ \ \ {\rm from} \ \eqref{Wilcom}\\ &=
2^n\bigg(y^2+rz^2+2\sum_{j=1}^{2^{n-3}-1}k_jx_j^2\bigg)I_m.
\end{align*}
From Theorem \ref{SOD2n}, there is $SOD\ \big(2^n; 1_{(2^n)}, S\big)$ such that $S$ admits a remrep of degree $2^{2^{n-1}-1}$. By Theorem \ref{sodtood}, there is $OD\ \Big(2^q; \ 2^{2^{n-1}-1}_{(2^n)}\Big)$, where $q=2^{n-1}+n-1$. Replacing variables in this OD by the Hermitian circulant matrices in the set $\Omega$, one obtains the desired COD.
\end{proof}
\begin{example}{\rm
Using Theorem \ref{s2}, we show that there is $$COD\ \big(2^{11}\cdot 31; \ 2^{11}\cdot 1, 2^{11}\cdot 8, 2^{11}\cdot 22\big).$$ 
Let $e$ be the 4-dimensional column vector of all ones, $(A;B)$ be a Golay pair of length $8$, and $(C;D)$ be a complex Golay pair of length $11$ as follows:
\begin{align*}
A&=(1,1,1,-,1,1,-,1), \ \ \ \ \ \ \ \ \ B=(1,1,1,-,-,-,1,-), \\
C&=(1,i,-,1,-,i,\overline{i},-,i,i,1), \ \ D=(1,1,\overline{i},\overline{i},\overline{i},1,1,i,-,1,-).
\end{align*}
Let $E=\big(y,xC,zA,xC_{\overline{R}}\big)$, $ F=\big(y,xD,zB,xD_{\overline{R}}\big)$ and
$$H=\left[\begin {array}{cccc} 1&1&1&1\\ 1&-&1&- \\ 1&1&-&-\\ 1&-&-&1 \end {array} \right]\!.$$  
We have 
$${\footnotesize eE\odot H=\left[\begin {array}{rrrr} y&xC&zA&xC_{\overline{R}}\\ y&-xC&zA&-xC_{\overline{R}} \\
 y&xC&-zA&-xC_{\overline{R}}\\ y&-xC&-zA&xC_{\overline{R}} \end {array} \right] \ {\rm and}\ eF\odot H=\left[\begin {array}{rrrr} y&xD&zB&xD_{\overline{R}}\\ y&-xD&zB&-xD_{\overline{R}} \\
 y&xD&-zB&-xD_{\overline{R}}\\ y&-xD&-zB&xD_{\overline{R}} \end {array} \right]\!.}$$
Let 
\begin{align*}
E_1&={\rm circ} \ \big(y,xC,zA,xC_{\overline{R}}\big),  \ \ \ \ \ \ \ \ F_1={\rm circ} \ \big(y,xD,zB,xD_{\overline{R}}\big), \\
E_2&={\rm circ} \ \big(y,-xC,zA,-xC_{\overline{R}}\big), \ \ \ F_2={\rm circ} \ \big(y,-xD,zB,-xD_{\overline{R}}\big), \\
E_3&={\rm circ} \ \big(y,xC,-zA,-xC_{\overline{R}}\big),  \ \ \ F_3={\rm circ} \ \big(y,xD,-zB,-xD_{\overline{R}}\big), \\
E_4&={\rm circ} \ \big(y,-xC,-zA,xC_{\overline{R}}\big),  \ \ \ F_4={\rm circ} \ \big(y,-xD,-zB,xD_{\overline{R}}\big).
\end{align*}
Note that the first rows of the circulant matrices above have dimension $31$, and we wrote them symbolically because of space limitations. From each of the circulant matrices above, one obtains two Hermitian circulant matrices. As an example, $E_3$ is the circulant matrix with the following first row:
$$\big(y,x,ix,\underline{x},x,\underline{x},ix,\overline{i}x,\underline{x},ix,ix,x,\underline{z},\underline{z},\underline{z},z,\underline{z},\underline{z},z,\underline{z},\underline{x},ix,ix,x,\overline{i}x,ix,x,\underline{x},x,ix,\underline{x}\big),$$
where $\underline{u}$ means $-u$. The following rows are the first rows of the supplementary Hermitian circulant matrices $E'_3=\dfrac{1}{2}(E_3+E_3^*)$ and $E''_3=\dfrac{i}{2}(E_3-E_3^*)$, respectively:
$$\big(y,0_{(11)},\underline{z},0,\underline{z},0,0,\underline{z},0,\underline{z},0_{(11)}\big),$$
$$ \big(0,ix,\underline{x},\overline{i}x,ix,\overline{i}x,\underline{x},x,\overline{i}x,\underline{x},\underline{x},ix,0,\overline{i}z,0,iz,
\overline{i}z,0,iz,0,\overline{i}x,\underline{x},\underline{x},ix,x,\underline{x},ix,\overline{i}x,ix,\underline{x},\overline{i}x\big).$$
Therefore, $E'_3+E''_3$ and $E'_3-E''_3$ are the desired two Hermitian circulant matrices obtained from $E_3$. Continuing this process, one obtains $16$ complementary Hermitian circulant matrices of order $31$. Replacing these matrices with variables in  $OD\ \big(2^{11}; 2^7_{(16)}\big)$ 
obtained from Theorem \ref{SOD2n} and \ref{sodtood}, one finds
\begin{align*}
COD\ \big(2^{11}\cdot 31; \ 2^{11}\cdot 1, 2^{11}\cdot 8, 2^{11}\cdot 22\big).
\end{align*}
}
\end{example}
\begin{remark}{\rm
The method of constructing the COD above and the infinite family of CODs in Theorem \ref{s2} is interesting because it uses (complex) Golay pairs and circulant matrices. Moreover, these CODs have no zero entries.  Applying Corollary \ref{CODtoOD} to the COD above, one obtains $$OD\ \big(2^{12}\cdot 31; \ 2^{12}\cdot 1, 2^{12}\cdot 8, 2^{12}\cdot 22\big).$$}
\end{remark}

\section{Other constructions for ODs using SODs}
Suppose that $m=2^{2^{n-1}-1}$ for some $n>2$. By Theorem \ref{setanti}, there is a set $A=\big\{I_m, A_1, A_2, \ldots, A_{2^n-1}\big\}$ of pairwise disjoint anti-amicable signed permutation matrices of order $m$. If we let $A'=A_{2^n-3}A_{2^n-2}A_{2^n-1}$ and $T'=\big<A', A_1, \ldots, A_{2^n-4}\big>$, then clearly $T'\le T\le SP_m$, where $T$ is the signed group in \eqref{signedT}. Let 
\begin{align}\label{signedSp} \nonumber
S'=\Big<s,s_1, \ldots, &s_{2^n-4}\colon\, s^2=1, \ s_{\alpha}^2=-1, \\   &ss_{\alpha}=-s_{\alpha}s, \ s_{\alpha}s_{\beta}=-s_{\beta}s_{\alpha}, \ \ 1\le \alpha\neq \beta\le 2^n-4\Big>.
\end{align}
It is easy to see that $S'\le S$, where $S$ is the signed group in \eqref{signedS}. We define a map $\pi: \ S'\longrightarrow T'$ by $\pi(s)=A'$ and $\pi(s_{\alpha})=A_{\alpha}$ for $1\le \alpha\le \ 2^n-4$ such that multiplication is preserved. 

In the following example, we show how one can construct a full SOD over $S'$ admitting the remrep $\pi$, which leads to a full OD.

\begin{example}{\rm
In order to construct a full SOD of type $(1,1,1,9,9,11)$, we may first construct a full SOD of type $(1_{(8)}, 8,8,8)$. To do so, first let $n=4$ in signed group $S'$ in \eqref{signedSp}
that admits a remrep of degree $2^7$, and let 
$$A= \left[\begin {array}{cc} s_1&1\\ 1&s_1\end {array} \right], \ \ I_s=\left[\begin {array}{cc} s&0\\ 0&s\end {array} \right], \ \ P_s=\left[ \begin {array}{cc} 0&s\\ s&0\end {array} \right],$$ where $s,s_1\in S'$, $s^2=1$, $s_1^2=-1$ and $ss_1=-s_1s$. It can be seen that $A, I_s$ and $P_s$ are pairwise amicable. Also, let
\begin{align*}
&B_1=I\otimes I \otimes I_s\otimes I_s\otimes I_s, \ \ \ \ \ \ \  B_9=P\otimes I\otimes A\otimes A\otimes A,         \\
&B_2=I\otimes I\otimes I_s\otimes I_s\otimes P_s, \ \ \ \ \ \     B_{10}=I\otimes P\otimes A\otimes A\otimes A,       \\ 
&B_3=I\otimes I\otimes I_s\otimes P_s\otimes I_s, \ \ \ \ \ \    B_{11}= P\otimes P\otimes A\otimes A\otimes A.        \\  
&B_4=I\otimes I\otimes P_s\otimes I_s\otimes I_s,  \\
&B_5=I\otimes I\otimes I_s\otimes P_s\otimes P_s,  \\
&B_6=I\otimes I\otimes P_s\otimes I_s\otimes P_s,  \\
&B_7=I\otimes I\otimes P_s\otimes P_s\otimes I_s, \\
&B_8=I\otimes I\otimes P_s\otimes P_s\otimes P_s,
\end{align*}
It can be easily verified that the $B_i$'s are supplementary pairwise amicable matrices. Using the relationships $s=\overline{s}$, $s_1s=\overline{s_1s}$, $ss_{\alpha}s_{\beta}=s_{\alpha}s_{\beta}s$, $s_1s_{\alpha}s_{\beta}=s_{\alpha}s_{\beta}s_1$ and $s_{\alpha}s_{\beta}=-s_{\beta}s_{\alpha}$ for $2\le \alpha\neq \beta\le 12$, 
it can be verified that $\sum_{i=1}^{11}B_i s_{i+1}x_i$ 
is $SOD\ \big(2^5; \ 1_{(8)}, 8,8,8, S'\big)$ such that $S'$ admits the remrep of degree $2^7$, where $x_i$'s are commuting variables. 
Equating variables in this SOD over $S'$, one obtains $SOD\ \big(2^5; \ 1,1,1, 9,9,11, S'\big)$ such that $S'$ admits the remrep of degree $2^7$. From Theorem \ref{sodtood}, one also obtains 
$$OD\ \big(2^7\cdot 2^5;\  2^7\cdot 1_{(3)},2^7\cdot 9_{(2)},2^7\cdot 11\big).$$}
\end{example}

\section{Discussion}
Hadamard matrices, ODs and CODs have many applications in coding theory, cryptography, signal processing, wireless networking and communications \cite{Seb, Seberry, Tarokh}. We observed that Hadamard matrices, ODs, CODs, SWs and SHs are specific SODs. An SOD over a signed group other than $S_{\mathbb R}$ or $S_{\mathbb C}$ may also have applications in the areas mentioned above. As we showed in Section 2, there exist types and orders in which there are no SODs of those types and orders. We also constructed some interesting families of full SODs. Using some special signed groups and extensive algebra may result in constructing other interesting families of full SODs and consequently full ODs and Hadamard matrices.

\section{Acknowledgement}
Sections 2 and 3 of this paper are a part of the author's Ph.D. thesis written under the direction of Professor Hadi Kharaghani at the University of Lethbridge. The author would like to thank Professors Hadi Kharaghani and Rob Craigen for their time and great help. The author also thank the reviewers of the journal for their time and essential comments that improved significantly the presentation of the paper.

\section*{References}

\end{document}